\begin{document}

\newfont{\teneufm}{eufm10}
\newfont{\seveneufm}{eufm7}
\newfont{\fiveeufm}{eufm5}
%
%
\newfam\eufmfam
                \textfont\eufmfam=\teneufm \scriptfont\eufmfam=\seveneufm
                \scriptscriptfont\eufmfam=\fiveeufm
%
%
\def\frak#1{{\fam\eufmfam\relax#1}}
%


\def\bbbr{{\rm I\!R}} 
\def\bbbm{{\rm I\!M}}
\def\bbbn{{\rm I\!N}} 
\def\bbbf{{\rm I\!F}}
\def\bbbh{{\rm I\!H}}
\def\bbbk{{\rm I\!K}}
\def\bbbp{{\rm I\!P}}
\def\bbbone{{\mathchoice {\rm 1\mskip-4mu l} {\rm 1\mskip-4mu l}
{\rm 1\mskip-4.5mu l} {\rm 1\mskip-5mu l}}}
\def\bbbc{{\mathchoice {\setbox0=\hbox{$\displaystyle\rm C$}\hbox{\hbox
to0pt{\kern0.4\wd0\vrule height0.9\ht0\hss}\box0}}
{\setbox0=\hbox{$\textstyle\rm C$}\hbox{\hbox
to0pt{\kern0.4\wd0\vrule height0.9\ht0\hss}\box0}}
{\setbox0=\hbox{$\scriptstyle\rm C$}\hbox{\hbox
to0pt{\kern0.4\wd0\vrule height0.9\ht0\hss}\box0}}
{\setbox0=\hbox{$\scriptscriptstyle\rm C$}\hbox{\hbox
to0pt{\kern0.4\wd0\vrule height0.9\ht0\hss}\box0}}}}
\def\bbbq{{\mathchoice {\setbox0=\hbox{$\displaystyle\rm
Q$}\hbox{\raise 0.15\ht0\hbox to0pt{\kern0.4\wd0\vrule
height0.8\ht0\hss}\box0}} {\setbox0=\hbox{$\textstyle\rm
Q$}\hbox{\raise 0.15\ht0\hbox to0pt{\kern0.4\wd0\vrule
height0.8\ht0\hss}\box0}} {\setbox0=\hbox{$\scriptstyle\rm
Q$}\hbox{\raise 0.15\ht0\hbox to0pt{\kern0.4\wd0\vrule
height0.7\ht0\hss}\box0}} {\setbox0=\hbox{$\scriptscriptstyle\rm
Q$}\hbox{\raise 0.15\ht0\hbox to0pt{\kern0.4\wd0\vrule
height0.7\ht0\hss}\box0}}}}
\def\bbbt{{\mathchoice {\setbox0=\hbox{$\displaystyle\rm
T$}\hbox{\hbox to0pt{\kern0.3\wd0\vrule height0.9\ht0\hss}\box0}}
{\setbox0=\hbox{$\textstyle\rm T$}\hbox{\hbox
to0pt{\kern0.3\wd0\vrule height0.9\ht0\hss}\box0}}
{\setbox0=\hbox{$\scriptstyle\rm T$}\hbox{\hbox
to0pt{\kern0.3\wd0\vrule height0.9\ht0\hss}\box0}}
{\setbox0=\hbox{$\scriptscriptstyle\rm T$}\hbox{\hbox
to0pt{\kern0.3\wd0\vrule height0.9\ht0\hss}\box0}}}}
\def\bbbs{{\mathchoice
{\setbox0=\hbox{$\displaystyle     \rm S$}\hbox{\raise0.5\ht0\hbox
to0pt{\kern0.35\wd0\vrule height0.45\ht0\hss}\hbox
to0pt{\kern0.55\wd0\vrule height0.5\ht0\hss}\box0}}
{\setbox0=\hbox{$\textstyle        \rm S$}\hbox{\raise0.5\ht0\hbox
to0pt{\kern0.35\wd0\vrule height0.45\ht0\hss}\hbox
to0pt{\kern0.55\wd0\vrule height0.5\ht0\hss}\box0}}
{\setbox0=\hbox{$\scriptstyle      \rm S$}\hbox{\raise0.5\ht0\hbox
to0pt{\kern0.35\wd0\vrule height0.45\ht0\hss}\raise0.05\ht0\hbox
to0pt{\kern0.5\wd0\vrule height0.45\ht0\hss}\box0}}
{\setbox0=\hbox{$\scriptscriptstyle\rm S$}\hbox{\raise0.5\ht0\hbox
to0pt{\kern0.4\wd0\vrule height0.45\ht0\hss}\raise0.05\ht0\hbox
to0pt{\kern0.55\wd0\vrule height0.45\ht0\hss}\box0}}}}
\def\bbbz{{\mathchoice {\hbox{$\sf\textstyle Z\kern-0.4em Z$}}
{\hbox{$\sf\textstyle Z\kern-0.4em Z$}} {\hbox{$\sf\scriptstyle
Z\kern-0.3em Z$}} {\hbox{$\sf\scriptscriptstyle Z\kern-0.2em
Z$}}}}
\def\ts{\thinspace}

\newtheorem{theorem}{Theorem}
\newtheorem{lemma}[theorem]{Lemma}
\newtheorem{claim}[theorem]{Claim}
\newtheorem{cor}[theorem]{Corollary}
\newtheorem{prop}[theorem]{Proposition}
\newtheorem{definition}{Definition}
\newtheorem{question}[theorem]{Open Question}

\def\squareforqed{\hbox{\rlap{$\sqcap$}$\sqcup$}}
\def\qed{\ifmmode\squareforqed\else{\unskip\nobreak\hfil
\penalty50\hskip1em\null\nobreak\hfil\squareforqed
\parfillskip=0pt\finalhyphendemerits=0\endgraf}\fi}

\def\cA{{\mathcal A}}
\def\cB{{\mathcal B}}
\def\cC{{\mathcal C}}
\def\cD{{\mathcal D}}
\def\cE{{\mathcal E}}
\def\cF{{\mathcal F}}
\def\cG{{\mathcal G}}
\def\cH{{\mathcal H}}
\def\cI{{\mathcal I}}
\def\cJ{{\mathcal J}}
\def\cK{{\mathcal K}}
\def\cL{{\mathcal L}}
\def\cM{{\mathcal M}}
\def\cN{{\mathcal N}}
\def\cO{{\mathcal O}}
\def\cP{{\mathcal P}}
\def\cQ{{\mathcal Q}}
\def\cR{{\mathcal R}}
\def\cS{{\mathcal S}}
\def\cT{{\mathcal T}}
\def\cU{{\mathcal U}}
\def\cV{{\mathcal V}}
\def\cW{{\mathcal W}}
\def\cX{{\mathcal X}}
\def\cY{{\mathcal Y}}
\def\cZ{{\mathcal Z}}

\newcommand{\comm}[1]{\marginpar{%
\vskip-\baselineskip 
\raggedright\footnotesize
\itshape\hrule\smallskip#1\par\smallskip\hrule}}




\newcommand{\ignore}[1]{}

\def\vec#1{\mathbf{#1}}

\def\e{\mathbf{e}}



\def\GL{\mathrm{GL}}

\hyphenation{re-pub-lished}

\def\rank{{\mathrm{rk}\,}}
\def\ad{{\mathrm ad}}

\def\A{\mathbb{A}}
\def\B{\mathbf{B}}
\def \C{\mathbb{C}}
\def \F{\mathbb{F}}
\def \K{\mathbb{K}}
\def \Z{\mathbb{Z}}
\def \P{\mathbb{P}}
\def \R{\mathbb{R}}
\def \Q{\mathbb{Q}}
\def \N{\mathbb{N}}
\def \Z{\mathbb{Z}}

\def \nd{{\, | \hspace{-1.5 mm}/\,}}

\def\Zn{\Z_n}

\def\Fp{\F_p}
\def\Fq{\F_q}
\def \fp{\Fp^*}
\def\\{\cr}
\def\({\left(}
\def\){\right)}
\def\fl#1{\left\lfloor#1\right\rfloor}
\def\rf#1{\left\lceil#1\right\rceil}

\def\SL{\mathrm{SL}}
\def\GL{\mathrm{GL}}

\def\Sing#1{\mathrm {Sing}\,#1}
\def\invp#1{\mbox{\rm {inv}}_p\,#1}
\def\Mq{\cM_{m,n}(\F_1)}
\def\Mnq{\cM_{n}(\F_q)}
\def\Znq{\cZ_{n}(\F_q)}
\def\Gnq{\GL_{n}(\F_q)}
\def\Snq{\SL_{n}(\F_q)}
\def\MZ{\cM_n(\Z)}
\def\vt{\vec{t}}
\def\MT{\cM_n(\cT)}
\def\MR{\cM_n(\cR)}
\def\MS{\cM_n(\cS)}

\def\SL{\mathrm{SL}}

\def\Ln#1{\mbox{\rm {Ln}}\,#1}

\def\ord#1{{\mathrm{ord}}_p\,#1}

\def\epp{\mbox{\bf{e}}_{p-1}}
\def\ep{\mbox{\bf{e}}_p}
\def\em{\mbox{\bf{e}}_{m}}
\def\ed{\mbox{\bf{e}}_{d}}

\def\ii {\iota}

\def\wt#1{\mbox{\rm {wt}}\,#1}

\def\GR#1{{ \langle #1 \rangle_n }}

\def\ab{\{\pm a,\pm b\}}
\def\cd{\{\pm c,\pm d\}}

\def\Bt {\mbox{\rm {Bt}}}

\def\Res#1{\mbox{\rm {Res}}\,#1}

\def\Tr#1{\mbox{\rm {Tr}}\,#1}

\setlength{\textheight}{43pc}
\setlength{\textwidth}{28pc}

\title{Some Additive Combinatorics Problems in  Matrix Rings}

\author{
{\sc Ron Ferguson} \\
{Department of Mathematics, University of Vlora}\\
{Vlora, Albania}\\
{\tt ronf@univlora.edu.al}
\and
{\sc Corneliu Hoffman} \\
{School of Mathematics, University of Birmingham}\\
{Edgbaston Birmingham B15 2TT, United Kingdom}\\
{\tt C.G.Hoffman@bham.ac.uk}
\and
{\sc Florian~Luca} \\
{Instituto de Matem{\'a}ticas, UNAM}\\
{C.P. 58089, Morelia, Michoac{\'a}n, M{\'e}xico} \\
{\tt fluca@matmor.unam.mx}
\and
{\sc Alina~Ostafe}\\
{Institut f\"ur Mathematik, Universit\"at Z\"urich}\\
{Winterthurerstrasse 190 CH-8057, Z\"urich, Switzerland}\\
{\tt alina.ostafe@math.uzh.ch}
\and
{\sc Igor E.~Shparlinski} \\
{Department of Computing, Macquarie University} \\
{Sydney, NSW 2109, Australia} \\
{\tt igor@ics.mq.edu.au}}

\maketitle

\newpage
\begin{abstract}  We study the distribution of singular and
unimodular matrices in sumsets in  matrix rings over finite fields.
We apply these results to estimate the largest prime divisor of the
determinants in sumsets in  matrix rings over the integers.
\end{abstract}

\paragraph*{2000 Mathematics Subject Classification.}\ 11C20,
11D79, 11T23

\paragraph*{Keywords.}\ Matrices, finite fields, additive combinatorics

\section{Introduction}

There is a series of recent works where various problems of additive
combinatorics (see~\cite{TaoVu}) have  been considered in the matrix
rings (see~\cite{BouGam,Chang1,Chang2,CHIKR,Helf1,Helf2} for several recent
results and further references in the area).

Here, we consider several more problems of combinatorial flavor in
the set $\Mnq$  of all $n \times n$ matrices over a finite field
$\F_q$ of $q$ elements.

Furthermore, let $\Gnq$,  $\Snq$ and $\Znq$ be the group of
invertible matrices, the group of matrices of determinant $1$ and
the set of singular matrices, respectively, where all matrices are
from $\Mnq$.

We always assume that $n \ge 2$ and in fact some of our results have
no analogues in the scalar case $n=1$.

Given two sets $\cA,\cB \subseteq \Mnq$, we define
\begin{eqnarray*}
N_{n,q}(\cA,\cB) & = & \#\{A+B  \in \Znq\ : \ A \in \cA,\ B \in \cB\},\\
T_{n,q}(\cA,\cB) & = & \#\{A+B   \in \Snq\ : \ A \in \cA,\ B \in \cB\}.
\end{eqnarray*}

We show that if $\cA$ and $\cB$ are sufficiently large, then
$N_{n,q}(\cA,\cB)$ and $T_{n,q}(\cA,\cB)$ are  close to their
expected value $\#\cA \#\cB/q$. We also adapt the method of D.~Hart,
A.~Iosevich and J.~Solymosi~\cite{HaIoSo} to show that pairwise
products of matrices from the  sumset of $\cA, \cB \subseteq \Mnq$
and the sumset of $\cC,\cD \subseteq \Mnq$ generate the whole group
$\Gnq$, provided that
\begin{equation}
\label{eq:4 sets}
\#\cA \#\cB\# \cC\#\cD \ge c(n) q^{4n^2 - 1}
\end{equation}
holds with a sufficiently large constant $c(n)$ depending only on
$n$. In fact, if $n=1$, that is for the scalar case, we obtain a result of the same strength 
of that of D.~Hart, A.~Iosevich and 
J.~Solymosi~\cite[Theorem~1.4]{HaIoSo} (for $d=2$). 

Although the questions we consider are of combinatorial natures, our
proofs are based on some  tools from analytic number theory and
algebraic geometry. In particular, we use  estimates of character
sums along algebraic varieties due to A.~Skorobogatov~\cite{Skor}
(see also~\cite{Fouv,FoKa,Ka,Lau,Luo,ShpSk} and references therein).
This in turn leads us to study the singularity locus as well as
other properties of some algebraic varieties associated with the
determinant.

Finally, we apply our results to estimate the number of prime divisors
of determinants of matrices from some sumsets of matrices over $\Z$. 

Throughout the paper, we always assume that $i$ and $j$ run
through the set $\{1,\ldots, n\}$.
The implied constants in the symbols `$O$',
and `$\ll$' may depend on the dimension $n \ge 2$. We
recall that the notations $U = O(V)$ and $U \ll V$ are all
equivalent to the assertion that the inequality $|U|\le cV$ holds
for some constant $c>0$.

\section{Preliminaries}

\subsection{Determinantal varieties}

Let $\K=\overline{\F}_q$ be the algebraic closure of $\F_q$. We consider $\cZ_n$ to be the affine variety in $\mathbb{A}^{n^2}_\K$ parameterizing singular
matrices of size $n \times n$.  Then $\Znq$ is the set of
$\F_q$-rational points of the variety $\cZ_n$.

\begin{lemma}
\label{lem:Det Var} The variety $\cZ_n$ defined over $\F_q$ is absolutely irreducible
of dimension $n^2-1$.
\end{lemma}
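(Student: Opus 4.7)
The variety $\cZ_n$ is by definition the vanishing locus of the single nonzero polynomial $\det(X) \in \K[x_{11}, \ldots, x_{nn}]$ inside the affine space $\A^{n^2}_\K$, so it is a hypersurface and every irreducible component has codimension one. This immediately gives $\dim \cZ_n = n^2 - 1$ with no further work.

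For absolute irreducibility, the plan is to exhibit $\cZ_n$ as the closure of a single orbit of a connected algebraic group. Since we work throughout over $\K = \overline{\F}_q$, irreducibility over $\K$ is the same as absolute irreducibility, so it suffices to argue over $\K$. I would consider the action of $G = \GL_n(\K) \times \GL_n(\K)$ on $\A^{n^2}_\K$ by $(A, B) \cdot M = A M B^{-1}$. This preserves rank, and the classical rank normal form shows that the set $\cZ_n^\circ$ of matrices of rank exactly $n-1$ is a single $G$-orbit, namely the orbit of $E = \mathrm{diag}(1, \ldots, 1, 0)$. Since $\GL_n$ is a connected algebraic group, so is $G$, and hence $\cZ_n^\circ$, being the image of the irreducible variety $G$ under the orbit morphism $g \mapsto g \cdot E$, is itself irreducible.

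It then remains to verify that $\cZ_n^\circ$ is open and dense in $\cZ_n$. Openness is immediate: its complement inside $\cZ_n$ is the set of matrices of rank at most $n-2$, cut out by the simultaneous vanishing of all $(n-1) \times (n-1)$ minors. For density I would invoke the standard dimension formula for determinantal varieties, giving $\dim\{M : \rank M \leq n-2\} = (n-2)(n+2) = n^2 - 4 < n^2 - 1$, so the rank $\leq n-2$ locus is a proper closed subset of every irreducible component of $\cZ_n$, and $\cZ_n^\circ$ meets, and is dense in, each component.

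With $\cZ_n^\circ$ irreducible and dense in $\cZ_n$, the variety $\cZ_n$ is itself irreducible, completing the argument. The step I expect to require the most care is the density claim: it rests on the dimension bound for the rank $\leq n-2$ locus, and if one prefers to avoid citing the determinantal-variety dimension formula, a direct alternative is to perturb any singular matrix $M$ of rank $r \leq n-2$ by a suitable rank-one matrix $N$ so that $M + tN$ has rank $r+1$ for small nonzero $t$ while remaining singular, and to iterate until rank $n-1$ is reached, obtaining $M$ as a limit of points of $\cZ_n^\circ$ inside $\cZ_n$.
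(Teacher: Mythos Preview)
Your proof is correct but follows a genuinely different route from the paper's. The paper disposes of irreducibility in one line: the polynomial $\det X$ is irreducible over $\K$ (the justification offered is that it is linear in each variable, which with a moment's care does force irreducibility), so the hypersurface it cuts out is irreducible; the dimension statement is then immediate from the principal ideal theorem. You instead use the transitive action of the connected group $\GL_n(\K)\times\GL_n(\K)$ on the rank-$(n-1)$ stratum to see that this stratum is irreducible, and then argue it is open and dense in $\cZ_n$. The paper's argument is shorter and purely algebraic; yours is more geometric and has the virtue that the same orbit-closure method proves irreducibility of every determinantal variety $\{M:\rank M\le r\}$, which the linearity trick does not. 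One small caution: your primary density argument appeals to the dimension of the rank-$\le n-2$ locus, which is essentially the content of the paper's next lemma (Lemma~\ref{lem:Sing Var}); your alternative perturbation argument avoids this forward reference and keeps the proof self-contained, so it is the cleaner option here.
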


\begin{proof}
Let $X=(X_{ij})$ be an $n\times n$ matrix of $n^2$ variables
$X_{ij}$ over $\K$. Then $\cZ_n$ is the affine variety defined by
the equation $\det X=0$. Since $\det X$ is an irreducible polynomial
over $\K$ because it is linear in each variable, the variety is
irreducible.

The fact that the dimension of the variety $\cZ_n$ is $n^2-1$ is just a direct consequence of the principal ideal theorem.
\end{proof}

Next, let $\Sing(\cZ_n)$ be the singular locus of $\cZ_n$.

\begin{lemma}
\label{lem:Sing Var} The variety
$\Sing(\cZ_n)$ defined over $\F_q$ is absolutely irreducible
of dimension $n^2-4$.
\end{lemma}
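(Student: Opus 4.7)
My plan is first to identify $\Sing(\cZ_n)$ set-theoretically as the determinantal variety of matrices of rank at most $n-2$, and then to establish absolute irreducibility and dimension via a standard incidence-variety argument over the Grassmannian.

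For the first step, since $\cZ_n$ is a hypersurface with defining equation $f(X) = \det X$, the Jacobian criterion gives $\Sing(\cZ_n) = \{X : \partial f/\partial X_{ij} = 0 \text{ for all } i,j\}$. The partial derivative $\partial \det X / \partial X_{ij}$ is (up to sign) the $(i,j)$-cofactor, i.e.\ an $(n-1)\times(n-1)$ minor of $X$. Hence $\Sing(\cZ_n)$ is precisely the vanishing locus of all $(n-1)\times(n-1)$ minors, which is the variety $\cV_{n-2} \subseteq \A^{n^2}_\K$ of matrices of rank at most $n-2$. This identification is independent of field, so $\Sing(\cZ_n)$ is defined over $\F_q$.

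For the second step, I would introduce the incidence variety
\[
\cW = \{(X,L) \in \A^{n^2}_\K \times \mathrm{Gr}(2,n) : L \subseteq \ker X\},
\]
where $\mathrm{Gr}(2,n)$ denotes the Grassmannian of $2$-dimensional subspaces of $\K^n$. The projection $\pi_2 : \cW \to \mathrm{Gr}(2,n)$ is a geometric vector bundle: the fiber over $L$ is $\mathrm{Hom}(\K^n/L,\K^n) \cong \K^{n(n-2)}$. Since $\mathrm{Gr}(2,n)$ is smooth and absolutely irreducible of dimension $2(n-2)$, the total space $\cW$ is absolutely irreducible of dimension $n(n-2) + 2(n-2) = n^2-4$. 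The other projection $\pi_1 : \cW \to \cV_{n-2}$ is surjective because every matrix of rank $\le n-2$ has a kernel of dimension $\ge 2$, and it is generically injective because a matrix of rank exactly $n-2$ has kernel of dimension exactly $2$, determining $L$ uniquely. Thus $\cV_{n-2} = \Sing(\cZ_n)$ is the image of an absolutely irreducible variety under a dominant generically finite morphism, hence is absolutely irreducible of dimension $n^2-4$.

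The bookkeeping that I expect to dominate the work is verifying the two properties of $\pi_1$: surjectivity (immediate from the rank description) and generic birationality (i.e.\ the locus in $\cV_{n-2}$ where $\dim \ker X > 2$ is a proper closed subvariety, which follows because matrices of rank exactly $n-2$ form a nonempty Zariski-open subset of $\cV_{n-2}$). The only genuine subtlety is confirming that $\pi_2$ is a locally trivial vector bundle rather than merely a variety with equidimensional fibers, which follows from the fact that the tautological subbundle on $\mathrm{Gr}(2,n)$ is locally trivial, so the relative $\mathrm{Hom}$ construction producing $\cW$ is as well. Everything is defined over $\F_q$, so absolute irreducibility of $\cW$ transfers to absolute irreducibility of $\Sing(\cZ_n)$.
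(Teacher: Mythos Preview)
Your proof is correct. Both you and the paper begin by identifying $\Sing(\cZ_n)$ with the locus of matrices of rank at most $n-2$ (the vanishing locus of all $(n-1)\times(n-1)$ minors), invoking the Jacobian criterion together with the fact that the partials of $\det X$ are the signed cofactors; the paper cites~\cite[Theorem~2.6]{BrVe} for this, while you write it out. From there the paper simply appeals to~\cite[Proposition~1.1, Theorems~2.1 and~2.5]{BrVe} for irreducibility and the dimension count, whereas you supply the standard incidence-correspondence proof directly: realize $\cW=\{(X,L):L\subseteq\ker X\}$ as a rank-$n(n-2)$ vector bundle over the Grassmannian $\mathrm{Gr}(2,n)$, compute $\dim\cW=(n+2)(n-2)=n^2-4$, and push down along the generically injective surjection $\pi_1$. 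This is exactly the argument that lies behind the Bruns--Vetter references, so the mathematical content is the same; your version has the advantage of being self-contained, at the cost of a few lines of verification (local triviality of the bundle, nonemptiness of the rank-exactly-$(n-2)$ locus) that you have correctly flagged.
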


\begin{proof}
Let $X=(X_{ij})$ be an $n\times n$ matrix of indeterminates over
$\K$. It follows from~\cite[Theorem~2.6]{BrVe} that the singular
locus of $\cZ_n$ is the affine variety defined by all the
$(n-1)$-minors of the matrix $X$. In~\cite[Proposition~1.1]{BrVe},
it is proved that this variety is irreducible over $\K$ by
identifying the affine space of $n\times n$ matrices with the affine
space of all $\K$-linear maps $f:\K^n\to \K^n$ whose coordinate ring
is just the polynomial ring $\K[\{X_{ij}\}]$. Then $\Sing(\cZ_n)$ is
just the variety of all linear maps of rank $r<n-1$.

The statement on the dimension of $\Sing(\cZ_n)$ follows immediately from~\cite[Theorems~2.1 and~2.5]{BrVe}.
\end{proof}

\subsection{Character sums over varieties}

Given two matrices $U = (u_{ij}), X = (x_{ij}) \in \Mnq$, we define
their scalar products as
$$
U\cdot X = \sum_{i,j=1}^n u_{ij}x_{ij}.
$$

Let $\psi$ be a fixed nonprincipal  additive character of $\F_q$.
For $U = (u_{ij}) \in \Mnq$ we consider the character sums
$$
S(\Znq,U) = \sum_{X \in \Znq} \psi(U\cdot X),
$$
$$
S(\Snq,U) = \sum_{X \in \Snq} \psi(U\cdot X).
$$

\begin{lemma}
\label{lem:Char Sum 1} Uniformly over all
nonzero matrices $U \in \Mnq$, we have
$$
S(\Znq,U) = O\(q^{n^2 - 5/2}\).
$$
\end{lemma}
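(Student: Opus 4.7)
The plan is to exploit the conical structure of $\cZ_n$: since $\det$ is homogeneous of degree $n$, the set $\cZ_n \setminus \{0\}$ is the punctured affine cone over a projective variety $\tilde\cZ_n \subset \P^{n^2-1}$. By Lemma~\ref{lem:Det Var}, $\tilde\cZ_n$ is absolutely irreducible of projective dimension $n^2-2$, and by Lemma~\ref{lem:Sing Var}, its singular locus has projective dimension $n^2-5$. The extra saving of a full power of $q$ beyond the naive $q^{n^2-3/2}$ that a direct application of Skorobogatov's bound to the affine variety $\cZ_n$ would give must come from this homogeneity, so the proof is forced to descend to the projective setting.

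The first step is to partition $\Znq \setminus \{0\}$ into the $\F_q^\times$-orbits of scalar multiplication. Choosing a representative $X$ of each $[X] \in \tilde\cZ_n(\F_q)$, the inner sum
\[
\sum_{\lambda \in \F_q^\times}\psi\(\lambda\, U\cdot X\)=\begin{cases} q-1, & U\cdot X=0,\\ -1, & U\cdot X\neq 0,\end{cases}
\]
so that, after adding the contribution of $X=0$, one gets the clean identity
\[
S(\Znq,U)=1+q\,M(U)-\bigl|\tilde\cZ_n(\F_q)\bigr|,\qquad M(U):=\bigl|\tilde\cZ_n(\F_q)\cap H_U\bigr|,
\]
with $H_U=\{Y\in\P^{n^2-1}:U\cdot Y=0\}$. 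This reduces the character sum estimate to two point-counting problems on projective varieties.

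The second step is to apply Skorobogatov's bound for $\F_q$-rational points on an absolutely irreducible projective variety with controlled singular locus. For $\tilde\cZ_n$, combining its dimension $n^2-2$ with its singular locus dimension $n^2-5$ yields
\[
\bigl|\tilde\cZ_n(\F_q)\bigr|=\pi_{n^2-2}+O\(q^{n^2-5/2}\),\qquad \pi_d:=\frac{q^{d+1}-1}{q-1}.
\]
Using the telescoping identity $q\pi_{n^2-3}+1=\pi_{n^2-2}$, one sees that the conclusion of Lemma~\ref{lem:Char Sum 1} reduces to the uniform estimate
\[
M(U)=\pi_{n^2-3}+O\(q^{n^2-7/2}\),
\]
which a second application of Skorobogatov would furnish provided $\tilde\cZ_n\cap H_U$ is absolutely irreducible of the expected dimension $n^2-3$ with a singular locus of dimension at most $n^2-6$.

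The main obstacle is this geometric input on the hyperplane section, uniformly in the nonzero matrix $U$. Since $\tilde\cZ_n$ is nondegenerately embedded, $\tilde\cZ_n\not\subseteq H_U$ for any $U\neq 0$, so the intersection has the correct dimension $n^2-3$; and the singular locus of the section picks up contributions from $\mathrm{Sing}(\tilde\cZ_n)\cap H_U$ (generically of dimension $n^2-6$). The delicate point is the extra tangential singularities that appear along the locus where $H_U$ is tangent to the smooth stratum of $\tilde\cZ_n$, which happens precisely when $U$ is, up to scalar, a cofactor matrix of some rank-$(n-1)$ matrix. Verifying that these tangential loci are small enough to preserve absolute irreducibility of the section and to keep its singular locus of dimension $\le n^2-6$ requires a careful analysis of the rank stratification of the determinantal variety, drawing on the Bruns--Vetter structure results already invoked in Lemmas~\ref{lem:Det Var} and~\ref{lem:Sing Var}; this is the step where the proof must do geometric work beyond the two cited lemmas.
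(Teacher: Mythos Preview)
The paper's proof is far more direct than yours: it applies Skorobogatov's bound (the combination of \cite[Theorem~3.2]{Skor} with \cite[Lemma~3.6]{Skor}) to the \emph{affine} variety $\cZ_n$ itself, obtaining $S(\Znq,U)=O\bigl(q^{(d+s)/2}\bigr)$ with $d=\dim\cZ_n=n^2-1$ and $s=\dim\Sing(\cZ_n)=n^2-4$ from Lemmas~\ref{lem:Det Var} and~\ref{lem:Sing Var}. There is no descent to the projective cone and no hyperplane-section analysis. Your opening premise---that a direct affine application of Skorobogatov would only yield $O\bigl(q^{n^2-3/2}\bigr)$ and that the homogeneity must be exploited by hand to recover the missing power of $q$---is not what the paper does: the singular-locus saving is already built into the cited affine estimate.

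Your alternative route via projective point counts is a legitimate strategy in principle, and the identity $S(\Znq,U)=1+q\,M(U)-|\tilde\cZ_n(\F_q)|$ is correct. But there is a genuine gap at exactly the step you yourself flag as delicate: the hyperplane section $\tilde\cZ_n\cap H_U$ is \emph{not} absolutely irreducible for every nonzero $U$. Already for $n=2$ and $U$ the elementary matrix with a single $1$ in position $(1,1)$, the section is $\{X_{11}=0,\ X_{12}X_{21}=0\}\subset\P^3$, a union of two lines; here $M(U)=2q+1$, which is not $\pi_1+O(q^{1/2})$. More generally the projective determinantal hypersurface is self-dual, so $H_U$ is tangent to $\tilde\cZ_n$ whenever $\det U=0$, and for low-rank $U$ the section can degenerate badly. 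Hence the uniform estimate $M(U)=\pi_{n^2-3}+O\bigl(q^{n^2-7/2}\bigr)$ that your reduction requires cannot be obtained from an irreducibility-plus-singular-locus argument alone; a separate treatment of the singular $U$ would be needed, and the proposal does not supply one. The paper sidesteps all of this by quoting the affine bound as a black box.
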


\begin{proof}  We recall that by Lemma~\ref{lem:Det Var} the variety $\Znq$
is absolutely irreducible.  It now follows immediately from
a combination of~\cite[Theorem~3.2]{Skor}
with~\cite[Lemma~3.6]{Skor} that
$$
S(\Znq,U) = O\(q^{(n^2 - 1 + s)/2}\),
$$
where $s$ is the dimension of $\Sing(\Znq)$ (see, for example, the
estimate of the sums $S_1(\cY_p, - u)$ in the proof
of~\cite[Theorem~5.1]{Skor}). It now remains to apply
Lemma~\ref{lem:Sing Var}.
\end{proof}

We also have a similar estimate for the exponential sum $S(\Snq,U)$.

\begin{lemma}
\label{lem:Char Sum 2} Uniformly over all
nonzero matrices $U \in \Mnq$, we have
$$
S(\Snq,U) = O\(q^{n^2 - 2}\).
$$
\end{lemma}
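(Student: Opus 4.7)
The plan is to adapt the argument of Lemma~\ref{lem:Char Sum 1} to the affine variety
\[
\cS_n \;=\; \{X \in \A^{n^2}_{\K} : \det X = 1\},
\]
whose set of $\F_q$-rational points is precisely $\Snq$.

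First I would verify that $\cS_n$ is absolutely irreducible of dimension $n^2-1$ over $\F_q$, exactly as in Lemma~\ref{lem:Det Var}: the polynomial $\det X - 1$ is irreducible over $\K$, since any nontrivial factorization, upon comparing top-degree homogeneous forms, would force the already irreducible polynomial $\det X$ itself to factor nontrivially; the dimension statement then follows from the principal ideal theorem. Second, I would determine the singular data that enter Skorobogatov's bound. The affine variety $\cS_n$ is itself smooth: for $X \in \cS_n$ the matrix is invertible, so its cofactor matrix (whose entries are the partial derivatives of $\det X$) is nonzero. Thus the singular contribution enters only through the projective closure $\overline{\cS_n} \subset \P^{n^2}$ defined by $\det X - Z^n = 0$, whose intersection with the hyperplane at infinity $\{Z = 0\}$ is the projectivisation of $\cZ_n$, and whose singular locus there is cut out by the vanishing of all $(n-1)$-minors of $X$, i.e., the projectivisation of $\Sing(\cZ_n)$; its dimension is controlled by Lemma~\ref{lem:Sing Var}. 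Finally, combining \cite[Theorem~3.2]{Skor} with \cite[Lemma~3.6]{Skor} exactly as in the proof of Lemma~\ref{lem:Char Sum 1} yields the claimed bound $S(\Snq,U) = O(q^{n^2-2})$.

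The main obstacle will be the shift in geometric profile compared with $\cZ_n$: while $\cZ_n$ is already singular in the affine chart, $\cS_n$ is smooth there and the singular contribution enters the Skorobogatov estimate only through the completion at infinity. The hard part is the careful bookkeeping of this singular locus at infinity, so as to confirm that it produces exactly the exponent $n^2 - 2$ asserted in the lemma, rather than the sharper exponent $n^2 - 5/2$ obtained for $S(\Znq,U)$ in Lemma~\ref{lem:Char Sum 1}.
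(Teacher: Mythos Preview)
Your route is genuinely different from the paper's. The paper does not invoke Skorobogatov at all for this lemma: it gives a direct, elementary argument. Assuming (after permuting indices) that $u_{11}\ne 0$, it expands $\det X$ along the first row as $\sum_j x_{1j}F_j(\widetilde X)$, where $\widetilde X$ collects the last $n-1$ rows, and rewrites $S(\Snq,U)$ as a sum over $\widetilde X\in\F_q^{n(n-1)}$ of an inner sum over $(x_{11},\dots,x_{1n})$ constrained by the affine equation $\sum_j x_{1j}F_j(\widetilde X)=1$. If the linear forms $\sum_j x_{1j}F_j(\widetilde X)$ and $\sum_j x_{1j}u_{1j}$ are independent, the inner character sum vanishes identically; if they are dependent, the inner sum is bounded trivially by $O(q^{n-1})$, and the dependence forces $F_1(\widetilde X)u_{12}=F_2(\widetilde X)u_{11}$, which (since $u_{11}\ne 0$) has at most $q^{n(n-1)-1}$ solutions $\widetilde X$. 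Multiplying gives $O(q^{n^2-2})$. The paper also remarks that the bound can alternatively be extracted from Kowalski~\cite{Kow}.

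Your algebro-geometric plan is plausible in outline, and your identification of $\Sing(\overline{\cS_n})$ with the rank~$\le n-2$ locus at infinity is correct. But you have not actually closed the argument: you yourself flag that the ``careful bookkeeping'' of how this singular locus feeds into~\cite[Theorem~3.2 and Lemma~3.6]{Skor} is the hard part, and you do not carry it out. Note in particular that $\cZ_n$ is a cone (a homogeneous hypersurface), whereas $\cS_n$ is not; so the way the affine sum, the projective closure, and the hyperplane at infinity interact in Skorobogatov's framework is not identical to the $\cZ_n$ case, and ``exactly as in the proof of Lemma~\ref{lem:Char Sum 1}'' is doing real work that you have not justified. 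By contrast, the paper's argument is two paragraphs of linear algebra with no geometric input beyond the cofactor expansion; if you want a self-contained proof, that is the cheaper path.
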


\begin{proof} 
Without loss of generality, we may assume that $u_{11} \ne 0$ .
Let $\widetilde X$ be the set of all $n(n-1)$ variable $x_{ij}$, $1\le i,j \le n$ with $i\ne 1$.
We then have
$$
\det X = \sum_{j=1}^n x_{1j} F_j(\widetilde X) 
$$
for some polynomials $F_1, \ldots, F_n$  (in fact,  each $F_j$ depends only on 
$(n-1)^2$ variables, of course). 
Then, 
\begin{equation}
\label{eq:Transf}
S(\Snq,U) =  
\sum_{\widetilde X \in \F_q^{n(n-1)}}  
\sum_{\substack {x_{11}, \ldots, x_{1n} \in \Fq\\
x_{11} F_1(\widetilde X) + \cdots + x_{1n} F_n(\widetilde X) = 1}} \psi\(U\cdot X\) , 
\end{equation}
where the outer sum runs over all the $q^{n(n-1)}$ specialisations 
of $\widetilde X$  over $\F_q$.

If $\widetilde X \in \F_q^{n(n-1)}$ is fixed such that 
the linear forms $x_{11} F_1(\widetilde X) + \ldots + x_{1n} F_n(\widetilde X)$ 
and $x_{11} u_{11} + \cdots + x_{1n} u_{1n}$ 
are linearly independent, then for each $z\in\F_q$ the system of two equations 
$$
x_{11} F_1(\widetilde X) + \cdots + x_{1n} F_n(\widetilde X) =1 \qquad 
\text{and} \qquad
x_{11} u_{11} + \cdots + x_{1n} u_{1n} = z
$$
has exactly $q^{n-2}$ solutions in $x_{11}, \ldots, x_{1n} \in \Fq$.
In this case 
$$
\sum_{\substack {x_{11}, \ldots, x_{1n} \in \Fq\\
x_{11} F_1(\widetilde X) + \cdots + x_{1n} F_n(\widetilde X) = 1}} \psi\(U\cdot X\) 
= q^{n-2} \psi\(\sum_{i=2}^n \sum_{j=1}^n  u_{ij} x_{ij}\) 
\sum_{z\in \Fq} \psi\(z\) 
= 0.
$$

For  $\widetilde X \in \F_q^{n(n-1)}$  such that 
the linear forms $x_{11} F_1(\widetilde X) + \cdots + x_{1n} F_n(\widetilde X)$ 
and $x_{11} u_{11} + \cdots + x_{1n} u_{1n}$ 
are linearly dependent, we estimate the inner sum over 
$x_{11}, \ldots, x_{1n} \in \Fq$ trivially as the number 
of solutions to 
$$
x_{11} F_1(\widetilde X) + \cdots + x_{1n} F_n(\widetilde X) = 1, 
\qquad x_{11}, \ldots, x_{1n} \in \Fq,
$$
which  is $O(q^{n-1})$. 
Furthermore, if $x_{11} F_1(\widetilde X) + \cdots + x_{1n} F_n(\widetilde X)$ 
and $x_{11} u_{11} + \cdots + x_{1n} u_{1n}$ 
are linearly dependent, then 
\begin{equation}
\label{eq:Equation}
F_1(\widetilde X)u_{12} = F_2(\widetilde X)u_{11}.
\end{equation}
Since $u_{11} \ne 0$, equation~\eqref{eq:Equation}
has at most $q^{n(n-1)-1}$ solutions ${\widetilde{X}}$. 

Recalling~\eqref{eq:Transf}, we conclude the proof.
\end{proof}

We note that Lemma~\ref{lem:Char Sum 2} can be alternatively derived
from~\cite{Kow}. 

Our next character sum  is a matrix analogue of the classical
Kloosterman sums (see~\cite{IwKow}). Namely, for $H, U, V \in \Mnq$,
we consider the character sum
$$
K(\Gnq,U,V,H) = \sum_{X \in \Gnq} \psi(U\cdot X + V\cdot (H X^{-1})).
$$

\begin{lemma}
\label{lem:Char Sum K}  Uniformly over all matrices $U, V  \in \Mnq$
among which at least one is a nonzero matrix,   and $H \in \Gnq$, we
have
$$
K(\Gnq,U,V,H)  \ll q^{n^2-1/2}.
$$
\end{lemma}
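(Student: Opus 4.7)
The plan is to interpret $K(\Gnq, U, V, H)$ as a character sum on an auxiliary algebraic variety, and then apply Skorobogatov's estimate in the spirit of the proof of Lemma~\ref{lem:Char Sum 1}. Introduce the affine variety
$$
\cW = \{(X, Y) \in \A^{2n^2}_{\K} : YX = H\}
$$
over $\K = \overline{\F}_q$. The projection $(X, Y) \mapsto X$ is an isomorphism $\cW \to \GL_n$ with inverse $X \mapsto (X, HX^{-1})$, so $\cW$ is smooth and absolutely irreducible of dimension $n^2$. Since $H$ is invertible, the equation $YX = H$ forces both $X$ and $Y$ to lie in $\Gnq$, so
$$
K(\Gnq, U, V, H) = \sum_{(X, Y) \in \cW(\F_q)} \psi\(L(X, Y)\), \qquad L(X, Y) = U \cdot X + V \cdot Y.
$$

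Next I would verify that the linear form $L$ is not constant on $\cW$. Pulling back by $X \mapsto (X, HX^{-1})$ yields the rational function
$$
\phi(X) = U \cdot X + \frac{P(X)}{\det X}, \qquad P(X) = \mathrm{tr}\(V^T H \, \mathrm{adj}(X)\),
$$
on $\A^{n^2}_{\K}$, where $\deg P \le n-1$. If $\phi \equiv c$, then $c \det X = (U \cdot X) \det X + P(X)$ as polynomials. The summand $(U \cdot X) \det X$ is homogeneous of degree $n+1$ while all other terms have degree at most $n$, which forces $U = 0$; a further degree comparison then gives $c = 0$ and $P \equiv 0$. Restricting $P \equiv 0$ to $X \in \GL_n$ and using $\mathrm{adj}(X) = (\det X) X^{-1}$ shows $\mathrm{tr}(V^T H Y) = 0$ for all invertible $Y$, hence for all $Y$ by Zariski density, so $V^T H = 0$ and $V = 0$, contradicting the hypothesis that $U$ or $V$ is nonzero.

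I would then apply the combination of \cite[Theorem~3.2]{Skor} and \cite[Lemma~3.6]{Skor}, as in the proof of Lemma~\ref{lem:Char Sum 1}, to the projective closure $\overline{\cW} \subset \P^{2n^2}$ of $\cW$ together with the nontrivial character $\psi \circ L$. This produces a bound of the shape $q^{(d+s)/2}$, where $d = \dim \overline{\cW} = n^2$ and $s = \dim \Sing(\overline{\cW})$. Since $\overline{\cW}$ is absolutely irreducible, its singular locus is a proper closed subvariety, giving $s \le n^2 - 1$ automatically; hence $|K(\Gnq, U, V, H)| \ll q^{n^2 - 1/2}$.

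The principal obstacle is the passage to the projective closure: one must verify that $\overline{\cW}$ is cut out in $\P^{2n^2}$ by the $n^2$ homogenized equations $(YX)_{ij} = w^2 H_{ij}$ (with homogenizing coordinate $w$), that it remains absolutely irreducible, and that the linear character $\psi \circ L$ is sufficiently nondegenerate on the boundary divisor $\{w = 0\}$ for Skorobogatov's estimate to apply in the form needed.
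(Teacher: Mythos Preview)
Your approach is genuinely different from the paper's and considerably heavier. The paper does not invoke Skorobogatov here at all; instead it exploits the scaling symmetry of $\Gnq$. For each $\lambda \in \F_q^*$ the map $X \mapsto \lambda X$ is a bijection of $\Gnq$, so averaging gives
$$
K(\Gnq, U, V, H) = \frac{1}{q-1} \sum_{X \in \Gnq} \sum_{\lambda \in \F_q^*} \psi\(\lambda\,(U \cdot X) + \lambda^{-1}\,(V \cdot HX^{-1})\).
$$
The inner sum over $\lambda$ is then a \emph{classical} one-variable Kloosterman sum, bounded by $O(q^{1/2})$ via the Weil bound when both $U \cdot X$ and $V \cdot HX^{-1}$ are nonzero. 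When exactly one of them vanishes the inner sum equals $-1$, and when both vanish it equals $q-1$; the hypothesis that $U$ or $V$ is nonzero guarantees the latter case occurs for at most $q^{n^2-1}$ matrices $X$. Summing these contributions gives the bound immediately.

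Your geometric route is plausible in outline, and your argument that $L$ is non-constant on $\cW$ is correct and carefully done. But the part you flag as an obstacle is a real gap, not a formality: you have not computed $\Sing(\overline{\cW})$, only bounded it trivially by $n^2-1$, and Skorobogatov's framework for complete intersections typically requires control of the hyperplane section at infinity and of how $L$ interacts with it, not merely irreducibility of $\overline{\cW}$. Even if this can be pushed through, it is a lot of machinery for a $q^{1/2}$ saving. The scaling trick is a standard device whenever the summation domain carries a free $\F_q^*$-action compatible with the phase, and it is worth having in your toolkit.
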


\begin{proof}
For every $\lambda \in \F_q^*$,  the matrix $\lambda X$ runs through
the whole group $\Gnq$, when so does $X$. Therefore,
\begin{eqnarray*}
\lefteqn{K(\Gnq,U,V,H)  =  \frac{1}{q-1} \sum_{\lambda \in \F_q^*}
\sum_{X \in \Gnq} \psi(U\cdot (\lambda X) + V\cdot (\lambda H X^{-1}))} \\
&   &  \qquad \qquad \qquad \quad = \frac{1}{q-1} \sum_{X \in \Gnq}
\sum_{\lambda \in \F_q^*}\psi(\lambda(U\cdot X) +\lambda^{-1}(V\cdot (H X^{-1}))).
\end{eqnarray*}

If both $U\cdot X $ and $V\cdot  (H X^{-1}) $ are nonzero elements
of $\F_q$, then the sum over $\lambda$ is a Kloosterman sum of size
$O(q^{1/2})$ (see~\cite[Theorem~11.11]{IwKow}).

If only one of $U\cdot X $ and $V\cdot  (H X^{-1}) $ is nonzero
element of $\F_q$, then  the sum over $\lambda$ is equal to $-1$.

Finally, if both $U\cdot X = 0$ and $V\cdot  (H X^{-1}) = 0$, then
the sum over  $\lambda$ is equal to $q-1$. However, because at least
one of  $U$ or $V$ is a nonzero matrix, this happens for at most
$q^{n^2-1}$ matrices $X \in \Gnq$ because $H \in \Gnq$. Now, after
some simple calculations, we obtain the desired bound.
\end{proof}

\section{Singular matrices in sumsets}

We show that if for some fixed $\varepsilon > 0$ we have $\#\cA
\#\cB \ge q^{2n^2 - 3 + \varepsilon}$, then
\begin{equation}
\label{eq:asymp N} N_{n,q}(\cA,\cB) = \left(\frac{1}{q} +
o(1)\right)\#\cA \#\cB,
\end{equation}
as $q \to \infty$.

\begin{theorem}
\label{thm:N}
We have
$$
\left| N_{n,q}(\cA,\cB) - \frac{\# \Znq \#\cA \#\cB}{q^{n^2}}\right|
= O\( q^{n^2-5/2} \sqrt{\# \cA\# \cB}\).
$$
\end{theorem}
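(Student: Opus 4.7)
The plan is to use the standard Fourier/orthogonality trick on $\Mnq$ to detect the condition $A+B \in \Znq$, extract a main term, and bound the error via Lemma~\ref{lem:Char Sum 1} and Parseval's identity.

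First, I would use the orthogonality relation $\sum_{U \in \Mnq} \psi(U \cdot Y) = q^{n^2} \mathbf{1}_{Y = 0}$ to write
\begin{equation*}
\mathbf{1}_{A+B \in \Znq} = \frac{1}{q^{n^2}} \sum_{U \in \Mnq} \sum_{X \in \Znq} \psi\bigl(U \cdot (A+B-X)\bigr).
\end{equation*}
Summing over $(A,B) \in \cA \times \cB$ and interchanging summations yields
\begin{equation*}
N_{n,q}(\cA,\cB) = \frac{1}{q^{n^2}} \sum_{U \in \Mnq} S(\cA,U)\, S(\cB,U)\, \overline{S(\Znq,U)},
\end{equation*}
where $S(\cS,U) := \sum_{X \in \cS} \psi(U \cdot X)$ for any set $\cS \subseteq \Mnq$. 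The contribution of $U = 0$ is precisely $\#\Znq \#\cA \#\cB / q^{n^2}$, the claimed main term.

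Next, I would estimate the remaining terms. By Lemma~\ref{lem:Char Sum 1} we have $|S(\Znq,U)| = O(q^{n^2-5/2})$ uniformly for every nonzero $U \in \Mnq$. Pulling this bound out of the sum gives
\begin{equation*}
\left| N_{n,q}(\cA,\cB) - \frac{\#\Znq \#\cA \#\cB}{q^{n^2}} \right| \ll \frac{q^{n^2-5/2}}{q^{n^2}} \sum_{U \in \Mnq} |S(\cA,U)|\, |S(\cB,U)|.
\end{equation*}
Applying the Cauchy--Schwarz inequality and then Parseval's identity $\sum_{U \in \Mnq} |S(\cS,U)|^2 = q^{n^2} \#\cS$ (which follows from orthogonality applied to $\cS \times \cS$) to both $\cA$ and $\cB$, I obtain
\begin{equation*}
\sum_{U \in \Mnq} |S(\cA,U)|\, |S(\cB,U)| \le q^{n^2}\sqrt{\#\cA \#\cB}.
\end{equation*}
Substituting this bound into the previous display cancels the factor $q^{n^2}$ and produces the desired estimate $O\bigl(q^{n^2-5/2}\sqrt{\#\cA \#\cB}\bigr)$.

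There is no real obstacle here: the heavy lifting has already been done in Lemma~\ref{lem:Char Sum 1}, which in turn rests on the absolute irreducibility of $\cZ_n$ and the dimension of its singular locus (Lemmas~\ref{lem:Det Var} and~\ref{lem:Sing Var}) together with Skorobogatov's character sum bound. Everything else is routine Fourier analysis on the additive group $\Mnq \cong \F_q^{n^2}$.
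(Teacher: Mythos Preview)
Your proposal is correct and follows essentially the same argument as the paper: orthogonality on $\Mnq$ to express $N_{n,q}(\cA,\cB)$ as a sum over $U$, separation of the $U=O_n$ main term, the uniform bound from Lemma~\ref{lem:Char Sum 1} on $S(\Znq,U)$, and then Cauchy--Schwarz together with Parseval. The only cosmetic difference is your use of the compact notation $S(\cS,U)$ and the conjugate $\overline{S(\Znq,U)}$ arising from your sign convention, which of course does not affect the modulus.
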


\begin{proof}  Let $\psi$ be a nontrivial additive character of $\F_q$.
We have
$$
N_{n,q}(\cA,\cB) = \frac{1 }{q^{n^2}} \sum_{X   \in \Znq}\sum_{A
\in \cA} \sum_{B  \in \cB} \sum_{U   \in  \Mnq} \psi\(U\cdot
\(X-A-B\)\),
$$
as the inner sum vanishes unless $x_{ij} = a_{ij} + b_{ij}$ for all
$i,j =1, \ldots, n$, in which case it is equal to  $q^{n^2}$. Here
 we put $A = (a_{ij})$, $B = (b_{ij})$ and $X = (x_{ij})$.

We now change the order of summation by taking the summation over
$U$ outside, and then  separate the term 
$\# \Znq \#\cA \#\cB/q^{n^2}$ corresponding to the 
zero matrix $U = O_n$,  getting
\begin{eqnarray*}
\lefteqn{\left| N_{n,q}(\cA,\cB) - \frac{\# \Znq \#\cA \#\cB}{q^{n^2}}\right|}\\
& &\qquad  = \frac{1 }{q^{n^2}} \sum_{\substack{U  \in  \Mnq\\U \ne
O_n}} S(\Znq,U) \sum_{A   \in \cA} \psi\(-U\cdot A \)\sum_{B   \in
\cB} \psi\(-\-U\cdot B\).
\end{eqnarray*}

By Lemma~\ref{lem:Char Sum 1}, we have
\begin{eqnarray*}
\lefteqn{\left| N_{n,q}(\cA,\cB) - \frac{\# \Znq \#\cA \#\cB}{q^{n^2}}\right|}\\
& &\qquad  \ll q^{-5/2} \sum_{\substack{U   \in  \Mnq\\U \ne O_n}}
\left|\sum_{A    \in \cA} \psi\(-U\cdot A \) \right|
 \left| \sum_{B  \in \cB}
\psi\(-\-U\cdot B\)\right|\\
& &\qquad  \ll q^{-5/2} \sum_{\substack{U   \in  \Mnq\\U \ne O_n}}
\left|\sum_{A    \in \cA} \psi\(U\cdot A \) \right|
 \left| \sum_{B  \in \cB}
\psi\(U\cdot B\)\right|.
\end{eqnarray*}
We now add the term with $U=O_n$ back
and use the Cauchy inequality. This yields
\begin{eqnarray*}
\lefteqn{
\sum_{\substack{U  \in  \Mnq\\U \ne O_n}}
\left|\sum_{A    \in \cA}
\psi\(U\cdot A \)\right|
 \left| \sum_{B  \in \cB}
\psi\(U\cdot B\)\right|}\\
& &\qquad \le \sum_{U  \in  \Mnq}
\left|\sum_{A    \in \cA}
\psi\(U\cdot A \)\right| \left| \sum_{B  \in \cB}
\psi\(U\cdot B\)\right|\\
& &\qquad \le \sqrt{\sum_{U   \in  \Mnq}
\left|\sum_{A    \in \cA}
\psi\(U\cdot A \)\right|^2} \sqrt{\sum_{U  \in  \Mnq}\left| \sum_{B  \in \cB}
\psi\(U\cdot B\)\right|^2}.
\end{eqnarray*}
We now remark that
\begin{eqnarray*}
 \sum_{U   \in  \Mnq}
\left|\sum_{A    \in \cA}
\psi\(U\cdot A \)\right|^2 &  = &q^{n^2} \# \cA,\\
\sum_{U  \in  \Mnq}\left| \sum_{B  \in \cB}
\psi\(U\cdot B\)\right|^2&=& q^{n^2} \# \cB,
\end{eqnarray*}
which are just variants of the  Parseval identity.

Collecting everything, we obtain the result.
 \end{proof}

Using the fact that $\# \Znq = q^{n^2 - 1} + O\(q^{n^2 - 2}\)$,
which follows, from the well-known formula 
$$
\# \Gnq = q^{(n^2-n)/2} \prod_{j=1}^n (q^j-1) = 
q^{n^2} - q^{n^2-1} + O\(q^{n^2 - 2}\)
$$
(see~\cite[Theorem~99]{Dic58}), we see that Theorem~\ref{thm:N}
implies~\eqref{eq:asymp N}. Furthermore, following the argument of
the proof of Theorem~\ref{thm:N},  
but using Lemma~\ref{lem:Char Sum 2} instead of 
Lemma~\ref{lem:Char Sum 1} in the appropriate place,
we obtain the following statement.

\begin{theorem}
\label{thm:T}
We have
$$
\left| T_{n,q}(\cA,\cB) - \frac{\# \Snq \#\cA \#\cB}{q^{n^2}}\right|
= O\( q^{n^2-2} \sqrt{\# \cA\# \cB}\).
$$
\end{theorem}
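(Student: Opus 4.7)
The plan is to follow the proof of Theorem~\ref{thm:N} almost verbatim, substituting $\Snq$ for $\Znq$ throughout and Lemma~\ref{lem:Char Sum 2} for Lemma~\ref{lem:Char Sum 1} at the one point where the character sum estimate enters. First, by orthogonality of additive characters of $\F_q$, for any nontrivial additive character $\psi$ I would write
$$
T_{n,q}(\cA,\cB) = \frac{1}{q^{n^2}} \sum_{X \in \Snq}\sum_{A \in \cA} \sum_{B \in \cB} \sum_{U \in \Mnq} \psi\left(U \cdot \left(X-A-B\right)\right),
$$
since the innermost sum equals $q^{n^2}$ when $X = A+B$ and vanishes otherwise.

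Next, I would swap the order of summation to put $U$ outside and split off the contribution from $U = O_n$, which yields the main term $\#\Snq\#\cA\#\cB/q^{n^2}$. The remaining terms form
$$
\frac{1}{q^{n^2}} \sum_{\substack{U \in \Mnq \\ U \ne O_n}} S(\Snq, U) \left(\sum_{A \in \cA} \psi(-U \cdot A)\right)\left(\sum_{B \in \cB} \psi(-U \cdot B)\right).
$$
Applying Lemma~\ref{lem:Char Sum 2} uniformly bounds $|S(\Snq,U)|$ by $O(q^{n^2-2})$, which after dividing by $q^{n^2}$ produces the crucial factor $q^{-2}$ in front; this is where the bound weakens from the $q^{-5/2}$ savings available in the $\Znq$ setting.

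Finally, I would add the $U = O_n$ term back to the resulting sum of products of character sums (which only enlarges it), apply the Cauchy--Schwarz inequality to decouple the sums over $\cA$ and $\cB$, and use the Parseval-type identities
$$
\sum_{U \in \Mnq} \left|\sum_{A \in \cA} \psi(U \cdot A)\right|^2 = q^{n^2} \#\cA, \qquad \sum_{U \in \Mnq} \left|\sum_{B \in \cB} \psi(U \cdot B)\right|^2 = q^{n^2} \#\cB,
$$
to majorize the product by $q^{n^2}\sqrt{\#\cA\#\cB}$. Combining these estimates yields the claimed bound.

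Since the main work has already been done in establishing Lemma~\ref{lem:Char Sum 2}, no genuine obstacle remains in the argument itself; the only conceptual point worth noting is that, unlike $\Znq$, the variety cut out by $\det X = 1$ is smooth, so one cannot invoke the Skorobogatov-type estimate along a nontrivial singular locus to improve the exponent, and the elementary linear-algebraic reduction of Lemma~\ref{lem:Char Sum 2} is what determines the final $q^{n^2-2}$ in the statement.
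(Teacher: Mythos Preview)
Your proposal is correct and matches the paper's own argument exactly: the paper simply states that Theorem~\ref{thm:T} follows by repeating the proof of Theorem~\ref{thm:N} with Lemma~\ref{lem:Char Sum 2} in place of Lemma~\ref{lem:Char Sum 1}, which is precisely what you do.
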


In particular, we derive from Theorem~\ref{thm:T} that if for some
fixed $\varepsilon > 0$ we have $\#\cA \#\cB \ge q^{2n^2 - 2 +
\varepsilon}$, then
$$
T_{n,q}(\cA,\cB) = \left(\frac{1}{q} + o(1)\right)\#\cA \#\cB,
$$
as $q \to \infty$.

\section{Generating   $\Gnq$ by  sumset products}

Here, we show that if the sets $ \cA, \cB,  \cC, \cD \subseteq \Mnq$
are large enough then the  sumset products
$$
\{(A+B)(C+D) \ : \ A \in \cA, B \in \cB, C \in \cC, D \in \cD\}
$$
generate the whole group $\Gnq$.

In fact, we give an asymptotic formula for $R(\cA, \cB,  \cC, \cD;
H)$, which is the number of solutions to the equation
$$
H = (A+B)(C+D), \qquad A \in \cA, B \in \cB, C \in \cC, D \in \cD.
$$

\begin{theorem}
\label{thm:SumProd}  Uniformly over all
matrices  $H \in \Gnq$, we have
$$
R(\cA, \cB,  \cC, \cD; H) =
\frac{1}{q^{n^2}}\# \cA \# \cB\# \cC\#\cD
+ O\(q^{n^2-1/2} \sqrt{\# \cA \# \cB \# \cC\#\cD}\).
$$
\end{theorem}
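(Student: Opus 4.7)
The plan is to expand $R(\cA,\cB,\cC,\cD;H)$ via additive characters of $\Mnq$, isolate the contribution of the trivial character as the main term, and estimate everything else using Lemma \ref{lem:Char Sum K} together with Cauchy--Schwarz and Parseval, in direct parallel to the proof of Theorem \ref{thm:N}.

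Since $H \in \Gnq$, every contributing quadruple has $A+B \in \Gnq$ and forces $C+D = (A+B)^{-1}H$. Writing
$$
r_{\cA,\cB}(X) = \#\{(A,B) \in \cA\times\cB : A+B = X\}
$$
and analogously $r_{\cC,\cD}$, I would begin from
$$
R(\cA,\cB,\cC,\cD;H) = \sum_{X \in \Gnq} r_{\cA,\cB}(X)\, r_{\cC,\cD}(X^{-1}H),
$$
and apply orthogonality over $\Mnq$ to both factors. Setting $\hat{\cS}(U) = \sum_{S\in\cS}\psi(U\cdot S)$, this yields
$$
R(\cA,\cB,\cC,\cD;H) = \frac{1}{q^{2n^2}}\sum_{U,V\in\Mnq} \hat{\cA}(U)\hat{\cB}(U)\hat{\cC}(V)\hat{\cD}(V)\, L(U,V,H),
$$
where $L(U,V,H) = \sum_{X\in\Gnq}\psi\bigl(-U\cdot X - V\cdot(X^{-1}H)\bigr)$.

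The term $(U,V) = (O_n,O_n)$ contributes $\#\Gnq\cdot \#\cA\#\cB\#\cC\#\cD / q^{2n^2}$; using the formula $\#\Gnq = q^{n^2} + O(q^{n^2-1})$ recalled after Theorem \ref{thm:N}, this produces the expected main term $\#\cA\#\cB\#\cC\#\cD / q^{n^2}$ up to an additional error of size $O\bigl(q^{-n^2-1}\#\cA\#\cB\#\cC\#\cD\bigr)$, which is absorbed by the stated bound because $\#\cS \le q^{n^2}$ for each set. For $(U,V) \neq (O_n,O_n)$ I would appeal to the estimate $|L(U,V,H)| \ll q^{n^2-1/2}$, then apply Cauchy--Schwarz separately to the $U$-sum and to the $V$-sum, together with the Parseval identity $\sum_{U\in\Mnq}|\hat{\cS}(U)|^2 = q^{n^2}\#\cS$; this yields an error of exactly the size $q^{n^2-1/2}\sqrt{\#\cA\#\cB\#\cC\#\cD}$ asserted in the theorem.

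The only real obstacle is that $L(U,V,H)$ uses $X^{-1}H$ rather than the $HX^{-1}$ appearing in Lemma \ref{lem:Char Sum K}. This is cosmetic: either one substitutes $Y = X^{-1}H$ to recast $L$ as a sum of the exact shape treated by Lemma \ref{lem:Char Sum K} (noting that, since $H$ is invertible, $U \mapsto H^{T}U$ permutes $\Mnq$ and preserves being nonzero), or one rereads the proof of that lemma and observes that the only ingredients used are that $V\cdot(X^{-1}H)$ scales as $\lambda^{-1}$ under the substitution $X \mapsto \lambda X$ and that for nonzero $V$ the equation $V\cdot(X^{-1}H) = 0$ cuts out at most $q^{n^2-1}$ matrices $X \in \Gnq$ (since $X^{-1}H$ ranges over $\Gnq$ bijectively when so does $X$). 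Both remain true verbatim, so the required bound on $L(U,V,H)$ follows; the remainder of the argument is a routine harmonic-analysis expansion.
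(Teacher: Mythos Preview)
Your proposal is correct and follows essentially the same route as the paper: an orthogonality expansion over $\Mnq\times\Mnq$, isolation of the $(U,V)=(O_n,O_n)$ term, the Kloosterman-type bound of Lemma~\ref{lem:Char Sum K}, and Cauchy--Schwarz/Parseval on the two remaining sums. Your use of $C+D=X^{-1}H$ is in fact the correct consequence of $H=(A+B)(C+D)$ (the paper writes $HX^{-1}$, a harmless slip in the noncommutative setting), and your reduction of $L(U,V,H)$ to the sum in Lemma~\ref{lem:Char Sum K} via the substitution $Y=X^{-1}H$ is exactly right.
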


\begin{proof}
Clearly $R(\cA, \cB,  \cC, \cD; H)$ is equal to the
number of solutions to the system of equations
$$
A+B = X, \qquad  C+D = HX^{-1},
$$
where  $A \in \cA$, $B \in \cB$, $C \in \cC$, $D \in \cD$ and
$X\in \Gnq$.

Using the orthogonality
property of characters, we now write
\begin{eqnarray*}
\lefteqn{R(\cA, \cB,  \cC, \cD; H)}\\
& & \quad = \sum_{A \in \cA}\sum_{B \in \cB}
\sum_{C \in \cC}\sum_{D \in \cD}  \sum_{X\in \Gnq} \\
& & \qquad \quad  \frac{1}{q^{2n^2}} \sum_{U,V \in \Mnq}
\psi(U\cdot(X - A - B) + V\cdot(HX^{-1} - C - D))\\
& & \quad =   \frac{1}{q^{2n^2}} \sum_{U,V \in \Mnq}
K(\Gnq,U,V,H) \\
& & \qquad \quad
\sum_{A \in \cA} \psi(-U\cdot A)\sum_{B \in \cB}
\psi(-U\cdot B)
\sum_{C \in \cC} \psi(-V\cdot C) \sum_{D \in \cD} \psi(-V\cdot D) .
\end{eqnarray*}
Separating the contribution of  the
zero matrices  $U = V = O_n$, we obtain
\begin{eqnarray*}
\lefteqn{R(\cA, \cB,  \cC, \cD; H) -
\frac{1}{q^{2n^2}} \# \cA \# \cB\# \cC\#\cD\#\Gnq}\\
& & \quad =   \frac{1}{q^{2n^2}} \sum_{\substack{U,V \in \Mnq\\ U\ne
O_n~\mathrm{or}~V\ne O_n}}
K(\Gnq,U,V,H) \\
& & \qquad \quad
\sum_{A \in \cA} \psi(-U\cdot A)\sum_{B \in \cB}
\psi(-U\cdot B)
\sum_{C \in \cC} \psi(-V\cdot C) \sum_{D \in \cD} \psi(-V\cdot D) .
\end{eqnarray*}
Therefore, by Lemma~\ref{lem:Char Sum K}, we have
\begin{eqnarray*}
\lefteqn{\left|R(\cA, \cB,  \cC, \cD; H) -
\frac{1}{q^{2n^2}} \# \cA \# \cB\# \cC\#\cD\#\Gnq\right|}\\
& &\qquad  = \frac{1}{q^{n^2-1/2}} \sum_{\substack{U,V \in \Mnq\\
U\ne O_n~\mathrm{or}~V\ne O_n}} \left|\sum_{A    \in \cA}
\psi\(-U\cdot A \) \right|
\left| \sum_{B  \in \cB}  \psi\(-U\cdot B\)\right|\\
& & \qquad \qquad\qquad\qquad\qquad\qquad \quad
\left|\sum_{C    \in \cC}  \psi\(-V\cdot C \) \right|
\left| \sum_{D  \in \cD}  \psi\(-V\cdot D\)\right|\\
& &\qquad  \le \frac{1}{q^{n^2-1/2}}
\sum_{U,V \in \Mnq}
\left|\sum_{A    \in \cA}  \psi\(U\cdot A \) \right|
\left| \sum_{B  \in \cB}  \psi\(U\cdot B\)\right|\\
& & \qquad \qquad\qquad\qquad\qquad\qquad \quad
\left|\sum_{C    \in \cC}  \psi\(V\cdot C \) \right|
\left| \sum_{D  \in \cD}  \psi\(V\cdot D\)\right|\\
& &\qquad  = \frac{1}{q^{n^2-1/2}}
\sum_{U  \in \Mnq}
\left|\sum_{A    \in \cA}  \psi\(U\cdot A \) \right|
\left| \sum_{B  \in \cB}  \psi\(U\cdot B\)\right|\\
& & \qquad \qquad\qquad\qquad\qquad\qquad \quad
\sum_{V  \in \Mnq} \left|\sum_{C    \in \cC}  \psi\(V\cdot C \) \right|
\left| \sum_{D  \in \cD}  \psi\(V\cdot D\)\right|.
\end{eqnarray*}
We apply the Cauchy inequality to each of the sums over $U$ and $V$
and, as in the proof of  Theorem~\ref{thm:N}, estimate them as
$q^{n^2} \sqrt{\# \cA \# \cB}$ and   $q^{n^2} \sqrt{\# \cC\#\cD}$,
respectively. We thus obtain
\begin{eqnarray*}
\lefteqn{\left|R(\cA, \cB,  \cC, \cD; H) -
\frac{1}{q^{2n^2}} \# \cA \# \cB\# \cC\#\cD\#\Gnq\right|}\\
& &\qquad \qquad \qquad \qquad \qquad
 \ll q^{n^2-1/2} \sqrt{\# \cA \# \cB \# \cC\#\cD}.
\end{eqnarray*}
Using that $\#\Gnq= q^{n^2} + O\(q^{n^2-1}\)$,
we obtain
\begin{eqnarray*}
\lefteqn{\left|R(\cA, \cB,  \cC, \cD; H) -
\frac{1}{q^{n^2}} \# \cA \# \cB\# \cC\#\cD \right|}\\
& &\qquad \qquad
 \ll  q^{-n^2-1}  \# \cA \# \cB\# \cC\#\cD +  q^{n^2-1/2} \sqrt{\# \cA \# \cB \# \cC\#\cD}.
\end{eqnarray*}
Clearly, the first term never dominates and the result now follows.
\end{proof}

We see from Theorem~\ref{thm:SumProd}
that  if for some fixed $\varepsilon > 0$ we have
$$
\#\cA \#\cB\# \cC\#\cD \ge q^{4n^2 - 1 + \varepsilon},
$$
then uniformly over $H \in \Gnq$,
$$
R(\cA, \cB,  \cC, \cD; H) = \left(\frac{1}{q^{n^2}} + o(1)\right) \#
\cA \# \cB\# \cC\#\cD,
$$
as $q \to \infty$. We also see that  $R(\cA, \cB,  \cC, \cD; H)>0$
under the condition~\eqref{eq:4 sets} with some appropriate constant
$c(n)$.

\section{Prime divisors of sumset determinants}

Given a set $\cT$ of integers, we denote by
$$
\MT = \{T = (t_{ij}) \in \MZ~:~ t_{ij} \in \cT,\ 1 \le i,j\le n\}
$$
the set of  all $n \times n$ matrices
with entries from $\cT$.

We now use our previous results to obtain a lower bound on the
number of distinct prime divisors of the product
$$
W(\cR,\cS) = \prod_{A    \in \MR}  \prod_{B  \in \MS}  \det\(A+B\),
$$
where the sets $\cR,\cS \subseteq\{1, \ldots, N\}$ are dense enough
and $N$ is a sufficiently large integer.

Given a prime $p$ and a set $\cS$ of integers,
we denote by $\nu_p(S)$ the number of residue classes
modulo $p$ which  contain at least one element of $\cS$.

We need the following statement which shows that $\nu_p(\cS)$ is
large for sufficiently many primes. It is a simple variant of
several other results of this type (see, for
example,~\cite{CrEls,Gal}).

\begin{lemma}
\label{lem:ResClass} Let $N$ and $Q$ be sufficiently large positive
integers. Let $\cT \subseteq\{1, \ldots, N\}$ be of cardinality
$\#\cT = T$. If $Q \le T \log N$, then for at least $0.6 Q/\log Q$
primes $p \in [Q,2Q]$ we have
$$
\nu_p(\cT) \ge \frac{p}{20 \log N \log p}.
$$
\end{lemma}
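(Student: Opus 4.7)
My plan is to use a standard collision-counting argument in the spirit of the large sieve. For a prime $p$, partition $\cT$ into residue classes modulo $p$ with sizes $n_1,\dots,n_{\nu_p(\cT)}$ summing to $T$. Cauchy--Schwarz gives
$$
\#\{(a,b)\in\cT^2:a\equiv b\pmod p\}=\sum_i n_i^2\ge\frac{T^2}{\nu_p(\cT)}.
$$
Call a prime $p\in[Q,2Q]$ \emph{bad} if $\nu_p(\cT)<p/(20\log N\log p)$. For such $p$ the number of congruent pairs exceeds
$$
\frac{20\,T^2\log N\log p}{p}\gg\frac{T^2\log N\log Q}{Q},
$$
using $p\le 2Q$ and $\log p\ge\log Q$.

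To bound the number of bad primes, I would compare the sum of these lower bounds with an upper bound on the total number of collisions obtained by swapping the order of summation. Writing $\Pi_Q$ for the set of primes in $[Q,2Q]$,
$$
\sum_{p\in\Pi_Q}\sum_i n_i^2 \le T\cdot\#\Pi_Q+T(T-1)\cdot\frac{\log N}{\log Q},
$$
where the diagonal $a=b$ contributes the first term, and for $a\ne b$ the nonzero integer $a-b$ with $|a-b|<N$ has at most $\log N/\log Q$ prime divisors in $[Q,2Q]$, giving the second term. Combined with the lower bound above, the number of bad primes is
$$
\ll\frac{Q^2}{T\log N\,(\log Q)^2}+\frac{Q}{(\log Q)^2}.
$$

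The hypothesis $Q\le T\log N$ makes the first summand $\ll Q/(\log Q)^2$. Since Chebyshev's estimate (or the prime number theorem) gives $\#\Pi_Q=(1+o(1))Q/\log Q$, the number of good primes exceeds $0.6\,Q/\log Q$ once $Q$ is large enough.

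I do not foresee any real obstacle in this plan. The only care required is the bookkeeping of absolute constants: the factor $20$ in the statement is precisely the slack needed so that, after invoking $Q\le T\log N$, the bad prime count is asymptotically negligible compared with $\#\Pi_Q$, leaving room for the explicit constant $0.6$. Everything else is routine, and the argument is essentially a crude application of the large sieve principle.
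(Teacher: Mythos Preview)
Your argument is correct and is essentially the same as the paper's: both bound $\sum_{p\in[Q,2Q]}\sigma_p(\cT)$ (your collision count) and then deduce that few primes can have $\sigma_p(\cT)$ large, hence $\nu_p(\cT)\ge T^2/\sigma_p(\cT)$ is large for most $p$. The only cosmetic difference is that the paper packages the off-diagonal count via the product $W=\prod_{t_1\ne t_2}(t_1-t_2)$ and its $p$-adic valuations, whereas you swap the order of summation directly; your version even retains an extra factor of $1/\log Q$ that the paper throws away, so your bad-prime bound $O(Q/(\log Q)^2)$ is a bit sharper than the paper's $0.3\,Q/\log Q$, comfortably yielding the constant $0.6$.
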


\begin{proof}
Let $\mu_{u,p}(\cT)$ be the number of $t \in \cT$
with $t \equiv u \pmod p$.

By the Cauchy inequality, we have
\begin{equation}
\label{eq:Cauchy} T = \sum_{u =0}^{p-1} \mu_{u,p}(\cT) =
\sum_{\substack{u =0\\ \mu_{u,p}(\cT) \ne 0}}^{p-1} \mu_{u,p}(\cT)
\le \sqrt{\nu_p(S) \sigma_p(\cT)},
\end{equation}
where
$$
\sigma_p(\cT) = \sum_{u =0}^{p-1} \mu_{u,p}(\cT)^2.
$$

We now consider the product
$$
W = \prod_{\substack{t_1,t_2 \in \cT\\ t_1 \ne t_2}} \(t_1-t_2\).
$$
Clearly,
\begin{equation}
\label{eq:Upper W}
1 \le |W| \le N^{T(T-1)}.
\end{equation}
Let $\ord z$ denote the exponent of the prime $p$ in the
factorization of the integer $z$. Collecting together pairs $s,t$ in
the same residue class $u$ modulo $p$, we see that
$$
\ord W \ge \sum_{u =0}^{p-1} \mu_{u,p}(\cT)\(\mu_{u,p}(\cT)-1\) =
\sigma_p(\cT)  - T.
$$
Therefore, using the fact that the number of primes $p\in [Q,2Q]$ is
at most $2Q/\log Q$ for large values of $Q$, which follows from the Prime
Number Theorem, we get that
\begin{equation}
\label{eq:p-adic}
|W| =  \sum_{p} p^{\ord W}
\ge  \prod_{p \in [Q,2Q]} Q^{\ord W} =
 Q^{-2QT/\log Q} \prod_{p \in [Q,2Q]} Q^{\sigma_p(\cT)},
\end{equation}
that provided $Q$ is large enough.

Comparing~\eqref{eq:Upper W} with~\eqref{eq:p-adic}
and recalling that $Q \le T \log N$,
we obtain
\begin{equation}
\label{eq:main ineq}
\sum_{p \in [Q,2Q]} \sigma_p(\cT)
 \le T^2 \log N + 2QT
 \le 3 T^2 \log N.
\end{equation}
Thus,
$$
\# \left\{p \in [Q, 2Q]~:~\sigma_p(\cT) \ge \frac{10 T^2 \log N \log Q}{Q}\right\}
\le  \frac{3Q}{10 \log Q}.
$$
For the remaining primes $p \in [Q,2Q]$, the number of which, by the
Prime Number Theorem, is at least
$$
\(1 - \frac{3}{10} + o(1) \) \frac{Q}{\log Q} \ge  0.6   \frac{Q}{\log Q}
$$
for large enough $Q$, we derive from~\eqref{eq:Cauchy} that
$$
\nu_p(\cT) \ge \frac{Q}{10 \log N \log Q}\ge \frac{p}{20\log p} ,
$$
which concludes the proof. 
\end{proof}

For the purpose of the next result, for a nonzero integer $m$ we write 
$\omega(m)$ for the number of its distinct prime factors.

\begin{theorem}
There exists a positive constant $c_0(n)$ depending only on $n$ such
that if $\cA,~\cB$ are subsets of $\{1,\ldots,N\}$ with
$$
\min\{\#\cA,\#\cB\}>c_0(n)(\log N)^{2n^2/3-1} (\log \log N)^{2n^2/3}
$$
and $N$ is sufficiently large, then
$$
\omega(W(\cA,\cB))\gg\min\{\#\cA,\#\cB\}.
$$
\end{theorem}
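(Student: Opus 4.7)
The plan is to exhibit many primes $p$ dividing $W(\cA,\cB)$ by reducing modulo $p$ and applying Theorem~\ref{thm:N} inside $\cM_n(\F_p)$. Set $T := \min\{\#\cA,\#\cB\}$. The key observation is that, for any prime $p$, the reductions $\cA_p,\cB_p \subseteq \cM_n(\F_p)$ of $\cM_n(\cA)$ and $\cM_n(\cB)$ have sizes exactly $\nu_p(\cA)^{n^2}$ and $\nu_p(\cB)^{n^2}$ respectively, because the $n^2$ entries are chosen independently. Consequently, whenever $N_{n,p}(\cA_p,\cB_p) > 0$, some pair of integer matrices $A \in \cM_n(\cA)$, $B \in \cM_n(\cB)$ satisfies $p \mid \det(A+B)$, and hence $p \mid W(\cA,\cB)$.

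I would choose $Q := T\log N$, restrict to primes $p \in [Q,2Q]$, and proceed in three steps. First, apply Lemma~\ref{lem:ResClass} separately with $\cT = \cA$ and $\cT = \cB$; the hypothesis $Q \le T\log N$ holds by construction, producing at least $0.6\,Q/\log Q$ primes ``good'' for $\cA$ (meaning $\nu_p(\cA) \ge p/(20\log N\log p)$) and likewise for $\cB$. Since the Prime Number Theorem supplies $(1+o(1))\,Q/\log Q$ primes in $[Q,2Q]$, a union bound leaves at least $(0.2+o(1))\,Q/\log Q$ primes simultaneously good for both sets. Second, for such a prime $p$, verify the hypothesis of Theorem~\ref{thm:N}: the bound
$$
\#\cA_p\,\#\cB_p \;\ge\; \left(\frac{p}{20\log N\log p}\right)^{2n^2}
$$
must dominate a large constant multiple of $p^{2n^2-3}$. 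Substituting $p \asymp T\log N$ and $\log p \ll \log(T\log N)$ reduces this to
$$
T^3 \;\gg\; (\log N)^{2n^2-3}\bigl(\log(T\log N)\bigr)^{2n^2},
$$
which, since $(2n^2-3)/3 = 2n^2/3-1$, is exactly what the hypothesis on $T$ delivers once $c_0(n)$ is chosen large enough. Third, Theorem~\ref{thm:N} then yields $N_{n,p}(\cA_p,\cB_p)>0$, so $p\mid W(\cA,\cB)$, and the prime count from step one gives
$$
\omega(W(\cA,\cB)) \;\gg\; \frac{Q}{\log Q} \;=\; \frac{T\log N}{\log(T\log N)} \;\gg\; T,
$$
the last inequality using $T\le N$ so that $\log(T\log N)\le 2\log N$.

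The main technical obstacle is the verification in step two, where the factor $\log(T\log N)$ needs careful handling. There are essentially two regimes. When $T$ is polylogarithmic in $N$, one has $\log(T\log N) \ll \log\log N$, and the hypothesis $T > c_0(n)(\log N)^{2n^2/3-1}(\log\log N)^{2n^2/3}$ is tight for forcing $T^3 \gg (\log N)^{2n^2-3}(\log\log N)^{2n^2}$. When $T$ is larger, the weaker inequality $T^3 \gg (\log N)^{2n^2-3}(\log T)^{2n^2}$ is what must be checked, and it follows easily from $T\le N$. A minor bookkeeping point is that the density $0.6 > 1/2$ in Lemma~\ref{lem:ResClass} is precisely what makes the union bound for simultaneous goodness produce a positive proportion of primes; a weaker density would force either a sharpening of that lemma or a more delicate intersection argument.
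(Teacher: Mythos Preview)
Your proposal is correct and follows essentially the same route as the paper: choose $Q \approx T\log N$, apply Lemma~\ref{lem:ResClass} to each of $\cA,\cB$ and intersect via the union bound, then invoke Theorem~\ref{thm:N} for the resulting good primes to force $p\mid W(\cA,\cB)$ and conclude $\omega(W(\cA,\cB))\gg Q/\log Q\gg T$. Your two-regime discussion of $\log(T\log N)$ versus $\log\log N$ is in fact more explicit than the paper, which simply asserts that the required inequality follows from the hypothesis.
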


\begin{proof} We apply  Lemma~\ref{lem:ResClass} with
$$
T = \min\{\#\cA,\#\cB\} \qquad \text{and} \qquad Q = \fl{T \log N},
$$
getting that for large $Q$ there are are at least
$$
\(0.2 + o(1) \) \frac{Q}{\log Q} \ge  0.1   \frac{Q}{\log Q}
$$
primes $p \in [Q,2Q]$ for which both inequalities
$$
\nu_p(\cA) \ge \frac{p}{10 \log N \log Q}
\qquad \text{and} \qquad
\nu_p(\cB) \ge \frac{p}{10 \log N \log Q}
$$
hold. Since obviously $T \le N$, we have
$$
  \frac{Q}{\log Q} \gg T
$$
such primes. From Theorem~\ref{thm:N}, we see that $$ p \mid
W(\cA,\cB)
$$
provided that
$$
(10 \log N \log Q)^{2n^2}  \le c_1(n) Q^{3}
$$
for an appropriate positive constant $c_1(n)$ depending only $n$.
The above inequality  is satisfied if
$$
(\log N \log T)^{2n^2/3}  \le c_2(n) T \log N
$$
for an appropriate constant $c_2(n)$, and this in turn is implied by
the condition of the theorem with a sufficiently large $c_0(n)$.
\end{proof}

\section*{Acknowledgements}

The authors are grateful to Tony Shaska for the invitation to
NATO Advanced Study Institute ``New Challenges in Digital
Communications'', Vlora, 2008. The stimulating atmosphere of
this meeting, enhanced by Albanian food and wine
has led to  the idea of this work.

The authors  would  also like to thank  Alexei Skorobogatov
for many valuable discussions and clarifications of some
results of~\cite{Skor}.

During the preparation of this paper,  F.~L. was supported in part
by Grant SEP-CONACyT 79685 and PAPIIT 100508, and I.~S. by ARC Grant
DP0556431.

\end{document}